\begin{document}
\newtheorem{thm}{Theorem}[section]
\newtheorem{lem}[thm]{Lemma}
\newtheorem{defn}[thm]{Definition}
\title{Generalized K\"ahler-Einstein metric along $\mathbb Q$-Fano fibration}


\author{Hassan Jolany\affil{}}

\address{
\affilnum{}University of Lille1, France}

\correspdetails{hassan.jolany@math.univ-lille1.fr}

\received{1 Month 20XX}
\revised{11 Month 20XX}
\accepted{21 Month 20XX}


\begin{abstract}
In this paper, we show that along $\mathbb Q$-Fano fibration, when general fibres, base and central fiber (with at worst Kawamata log terminal singularities)are K-poly stable then there exists a relative K\"ahler-Einstein metric. We introduce the fiberwise K\"ahler-Einstein foliation and we mention that the main difficulty to obtain higher estimates is to solve relative CMA equation along such foliation.

We propose a program such that for finding a pair of canonical metric $(\omega_X,\omega_B)$, which satisfies in 

$Ric(\omega_X)=\pi^*\omega_B+\pi^*(\omega_{WP})+[\mathcal N]$

on K-poly stable degeneration $\pi:X\to B$,  where $Ric(\omega_B)=\omega_B$, we need to have Canonical bundle formula.
\end{abstract}

\maketitle

\section{Introduction}
Let $(X,L)$ be a polarized projective variety. Given an ample line bundle $L\to X$, then a test configuration for the pair $(X,L)$ consists of: 

 - a scheme $\mathfrak X$ with a $\mathbb C^*$-action
 
 - a flat $\mathbb C^*$-equivariant map $\pi:\mathfrak X\to \mathbb C$ with fibres $X_t$; 
 
 - an equivariant line bundle $\mathfrak L\to \mathfrak X$, ample on all fibres;
 
 - for some $r>0$, an isomorphism of the pair $(\mathfrak X_1, \mathfrak L_1)$ with the original pair $(X,L^r)$. 

Let $U_k=H^0(\mathfrak X_0,\mathfrak L_0^k\mid_{\mathfrak X_1})$ be vector spaces with $\mathbb C^*$-action, and let $A_k \colon U_k\to U_k$ be the endomorphisms generating those actions. Then

$$\operatorname{dim} U_k=a_0k^n+a_1k^{n-1}+\dots$$

$$\operatorname{Tr}\left(A_k\right)=b_0k^{n+1}+b_1k^n+\dots$$

Then the Donaldson-Futaki invariant of a test configuration $(\mathfrak X,\mathfrak L)$ is

$$Fut(\mathfrak X,\mathfrak L)=\frac{2(a_1b_0-a_0b_1)}{a_0}.$$

A Fano variety $X$ is K-stable (respectively K-poly stable) if $Fut(\mathfrak X,\mathfrak L)\geq0$ for all normal test configurations $\mathfrak X$ such that $X_0\neq X$ and equality holds if $(\mathcal X,\mathcal L)$ is trivial (respectively, $\mathcal X=X\times \mathbb A^1$).See \cite{5,6,7,8,9} and references therein.

Let $X$ be a projective variety with canonical line bundle $K_X\to X$ of Kodaira dimension $$\kappa(X)=\limsup\frac{\log \dim H^0(X,K_X^{\otimes \ell})}{\log\ell}$$ This can be shown to coincide with the maximal complex dimension of the image of $X$ under pluri-canonical maps to complex projective space, so that $\kappa(X)\in\{-\infty,0,1,...,m\}$.
\;

 A K\"ahler current $\omega$
is called a conical K\"ahler metric (or Hilbert Modular type) with angle $2\pi\beta$, $(0< \beta <1)$ along the divisor $ D$, if
$\omega$ is smooth away from $D$ and
asymptotically equivalent along $D$ to the model conic metric 

$$\omega_{\beta}=\sqrt{-1}\left(\frac{dz_1\wedge d\bar{z_1}}{|z_1|^{2(1-\beta)}}+\sum_{i=2}^n dz_i\wedge d\bar{z_i}\right)$$
 here $(z_1,z_2,...,z_n)$ are local holomorphic coordinates and $D=\{z_1=0\}$ locally. See\cite{18}

For the log-Calabi-Yau fibration $f:(X,D)\to Y$, such that $(X_t,D_t)$ are log Calabi-Yau varieties. If $(X,\omega)$ be a K\"ahler variety with Poincaré singularities then the semi-Ricci flat metric has $\omega_{SRF}|_{X_t}$ is quasi-isometric with the following model which we call it \textbf{fibrewise Poincaré singularities.}

$$\frac{\sqrt[]{-1}}{\pi}\sum_{k=1}^n\frac{dz_k\wedge d\bar {z_k}}{|z_k|^2(\log|z_k|^2)^2}+\frac{\sqrt[]{-1}}{\pi}\frac{1}{\left(\log|t|^2-\sum_{k=1}^n\log|z_k|^2\right)^2}\left(\sum_{k=1}^n\frac{dz_k}{z_k}\wedge \sum_{k=1}^n\frac{d\bar {z_k}}{\bar {z_k}}\right)$$

We can define the same \textbf{fibrewise conical singularities.} and the semi-Ricci flat metric has $\omega_{SRF}|_{X_t}$ is quasi-isometric with the following model

$$\frac{\sqrt[]{-1}}{\pi}\sum_{k=1}^n\frac{dz_k\wedge d\bar {z_k}}{|z_k|^2}+\frac{\sqrt[]{-1}}{\pi}\frac{1}{\left(\log|t|^2-\sum_{k=1}^n\log|z_k|^2\right)^2}\left(\sum_{k=1}^n\frac{dz_k}{z_k}\wedge \sum_{k=1}^n\frac{d\bar {z_k}}{\bar {z_k}}\right)$$

In fact the previous remark will tell us that the semi Ricci flat metric $\omega_{SRF}$ has pole singularities.

A pair $(X,D)$ is log $\mathbb Q$-Fano if the $\mathbb Q$-Cartier divisor $-(K_X+D)$ is ample. For a klt pair $(X,D)$ with
$\kappa(K_X+D)=-\infty$, according to the log minimal model program, there exists a birational map $\phi:X\to Y$
and a morphism
$Y
\to
Z$
such that for
$D'=\phi_*D$ , the pair $(Y_z,D_z')$ is log $\mathbb Q$-Fano with Picard number $\rho(Y_z)=1$ for general $z\in Z$. In particular, log $\mathbb Q$-Fano pairs are the building blocks for pairs with negative Kodaira dimension.

Now, we consider fibrations with K\"ahler Einstein metrics with positive Ricci curvature and CM-line bundle. Let $\pi:(X,D)\to B$ be a holomorphic submersion between compact K\"ahler manifolds of any dimensions, whose log fibers $(X_s,D_s)$ and base have no no-zero holomorphic holomorphic vector fields and whose log fibers admit conical K\"ahler Einstein metrics with positive Ricci curvature on $(X,D)$. We give a sufficient topological condition that involves the CM-line bundle on $B$.

Let $B$ be a normal variety such that $K_B$ is $\mathbb{Q}$-Cartier, and $f:X\to B$ a resolution of singularities. Then, 
$$
K_X = f^\ast(K_B) +\sum_i a_iE_i
$$
where $a_i \in \mathbb{Q}$ and the $E_i$ are the irreducible exceptional divisors.  Then the singularities of $B$ are terminal, canonical, log terminal or log canonical if $a_i > 0, \geq 0, >-1$ or $\geq -1$, respectively.

Firstly, we introduce the log CM-line bundle. Let $\pi:(X,D)\to B$ be a holomorphic submersion between compact K\"ahler manifolds and let $L+D$ be a relatively ample line bundle. Let $\mathcal Y=(X_s,D_s)$ be a log fiber  $n=\dim X_s$ and let $\eta$ denote the constant 

$$\eta=\frac{nc_1(\mathcal Y)c_1(L+D)\mid_{\mathcal Y}^{n-1}}{c_1(L+D)\mid_{\mathcal Y}^{n}}$$

Let $K_{X/B}$ denote the relative canonical bundle. The log CM-line bundle is the virtual bundle and introduced by Tian \cite{39} as follows 

$$\mathcal L_{CM}^D={n+1}\left((K_{X/B}+D)^*-(K_{X/B}+D)\right)\otimes \left((L+D)-(L+D)^*\right)^n-\eta \left((L+D)-(L+D)^*\right)^{n+1} $$

For finding the first Chern classof log CM-line bundle, we need to the following Grothendieck-Riemann-Roch theorem.

Let $X$ be a smooth quasi-projective scheme over a field and $K_{0}(X)$ be the Grothendieck group of bounded complexes of coherent sheaves. Consider the Chern character as a functorial transformation
$${\mbox{ch}}\colon K_{0}(X)\to A(X,{\mathbb {Q}})$$, where $A_{d}(X,{\mathbb {Q}})$
is the Chow group of cycles on $X$ of dimension $d$ modulo rational equivalence, tensored with the rational numbers. 
Now consider a proper morphism

   $$f\colon X\to Y$$

between smooth quasi-projective schemes and a bounded complex of sheaves ${{\mathcal {F}}^{\bullet }}$ on $X$. Let $R$ be the right derived functor, then we have the following theorem of Grothendieck-Riemann-Roch.

\begin{thm}
The Grothendieck-Riemann-Roch theorem relates the pushforward map
    $$f_{\mbox{!}}=\sum (-1)^{i}R^{i}f_{*}\colon K_{0}(X)\to K_{0}(Y)$$

and the pushforward $f_{*}\colon A(X)\to A(Y)$
by the formula
    $${\mbox{ch}}(f_{\mbox{!}}{\mathcal {F}}^{\bullet }){\mbox{td}}(Y)=f_{*}({\mbox{ch}}({\mathcal {F}}^{\bullet }){\mbox{td}}(X))$$
Here $td(X)$ is the Todd genus of (the tangent bundle of) $X$. In fact, since the Todd genus is functorial and multiplicative in exact sequences, we can rewrite the Grothendieck-Riemann-Roch formula as
    $${\mbox{ch}}(f_{\mbox{!}}{\mathcal {F}}^{\bullet })=f_{*}({\mbox{ch}}({\mathcal {F}}^{\bullet }){\mbox{td}}(T_{f}))$$
where $T_f$ is the relative tangent sheaf of $f$, defined as the element $TX - f^*TY$ in $K^0(X)$. For example, when $f$ is a smooth morphism, $T_f$ is simply a vector bundle, known as the tangent bundle along the fibers of $f$.
\end{thm}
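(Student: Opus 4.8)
\noindent\emph{Proof sketch.}
The plan is to follow the classical two-step reduction: verify the formula separately for closed immersions and for the projection from projective space, and then glue the two cases via functoriality of both sides. So first I would establish that the assertion is functorial: if $\mathrm{ch}(f_!\mathcal F^\bullet)=f_*(\mathrm{ch}(\mathcal F^\bullet)\,\mathrm{td}(T_f))$ holds for two composable morphisms $f\colon X\to Y$ and $g\colon Y\to Z$, then it holds for $g\circ f$. This comes from the Leray composition $R(g\circ f)_*=Rg_*\circ Rf_*$ on $K_0$, the functoriality $g_*\circ f_*=(g\circ f)_*$ of proper push-forward on Chow groups, the projection formula $f_*(x\cdot f^*y)=f_*(x)\cdot y$, and the Whitney behaviour of the Todd class applied to $0\to T_f\to T_{g\circ f}\to f^*T_g\to 0$, which yields $\mathrm{td}(T_{g\circ f})=\mathrm{td}(T_f)\cdot f^*\mathrm{td}(T_g)$. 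Since any projective morphism factors as a closed immersion $i\colon X\hookrightarrow \mathbb P^N_Y$ followed by the projection $p\colon \mathbb P^N_Y\to Y$ (available directly in the quasi-projective setting; in general one reaches it through Chow's lemma and blow-ups, which I would only indicate), it then suffices to treat these two cases.

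Second, the projection $p\colon \mathbb P^N\times Y\to Y$. The projective bundle formula presents $K_0(\mathbb P^N\times Y)$ as a free $K_0(Y)$-module on $1,h,\dots,h^N$ with $h=[\mathcal O(1)]$, and presents $A(\mathbb P^N\times Y,\mathbb Q)$ analogously over $A(Y,\mathbb Q)$. By the projection formula it is enough to check the identity on classes of the form $p^*\alpha\otimes[\mathcal O(k)]$; flat base change then reduces the verification to a computation on $\mathbb P^N$ itself, where one writes down $R^\bullet p_*\mathcal O(k)$ explicitly and compares it with the push-forward of $\mathrm{td}(T_{\mathbb P^N})$ against the relevant power of the hyperplane class. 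This is in the end a binomial-coefficient (or residue) identity.

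Third, and this is where the actual content lies, the closed immersion $i\colon X\hookrightarrow W$ of smooth schemes with normal bundle $N$. I would use deformation to the normal cone: blowing up $W\times\mathbb P^1$ along $X\times\{\infty\}$ produces a family interpolating between $i$ and the zero-section embedding $j\colon X\hookrightarrow N$ (suitably compactified), and the associated specialization maps on $K_0$ and on Chow groups, which are compatible with the Chern character, reduce the statement to the zero-section case. For $j$, the structure sheaf $j_*\mathcal O_X$ is resolved by the Koszul complex $\Lambda^\bullet(\pi^*N^\vee)$, where $\pi\colon N\to X$, so the problem collapses to the self-intersection identity $\mathrm{ch}\bigl(\sum_q(-1)^q\Lambda^q N^\vee\bigr)\cdot\mathrm{td}(N)=1$ in $A(X,\mathbb Q)$ after pulling back, which the splitting principle settles by reducing to line bundles. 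Tensoring with a general perfect complex $\mathcal F^\bullet$, and using $Li^*i_*$ together with the excess-intersection (``Riemann-Roch without denominators'') description of the correction terms, then gives the full statement for immersions, and combining with the projective-space case finishes the proof.

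The main obstacle is precisely this third step: constructing the specialization homomorphism on $K_0$ and proving its compatibility with the Chern character, and dealing with an arbitrary perfect complex $\mathcal F^\bullet$ rather than a locally free sheaf, which is exactly what forces the denominator-free form of the Riemann-Roch theorem for immersions. The projective-space case and the functoriality are essentially bookkeeping with the projective bundle formula and the splitting principle; all of the geometry, and the subtlety hidden in the denominators of the Todd class, is concentrated in the closed-immersion case.
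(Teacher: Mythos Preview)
Your sketch is the standard Borel--Serre/Fulton proof of Grothendieck--Riemann--Roch and is correct as an outline. However, the paper does not prove this theorem at all: it is stated without proof as a classical result and then immediately applied to compute $c_1(\mathcal L_{CM})$. There is nothing to compare your argument against; the theorem is simply quoted, so any reference to SGA~6 or Fulton's \emph{Intersection Theory} would suffice in place of your sketch.
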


So by applying Grothendieck-Riemann-Roch theorem, the first Chern class of log CM-line bundle is $$c_1(\mathcal L_{CM})=2^{n+1}\pi_*\left[\left((n+1)c_1(K_{X/B}+D)+\eta c_1(L+D)\right)c_1(L+D)^n\right]$$

We need to introduce log Weil-Petersson metric on the moduli space of conical K\"ahler-Einstein Fano varieties. We can introduce it by applying Deligne pairing. First we recall Deligne pairing here.\cite{48}

Let
$\pi:X\to S$ be a projective morphism. Let
$\mathcal F$ be a coherent sheaf on
$X$. Let $\det \text{R}\pi_*\mathcal F$ be the line bundle on $S$.
 If $\text{R}^i\pi_*\mathcal F$ is locally free for any $i$, then

$$\det\text{R}\pi_*\mathcal F=\bigotimes_i\left(\det\text{R}^i\pi_*\mathcal F\right)^{(-1)^i} $$

For any flat morphism $\pi:X\to S$ of relative dimension one between normal integral schemes $X$ and $S$, we denote the Deligne pairing by

$$\langle -,-\rangle:\text{Pic}(X)\times \text{Pic}(X)\to\text{Pic}(S)$$

In this case, the Deligne's pairing can be defined as follows. Let
$L_1;L_2\in \text{Pic}(X)$, then

$$\langle L_1,L_2\rangle: \det \text{R}\pi_*(L_1\otimes L_2)\otimes \det \text{R}\pi_*(L_1^{-1})\otimes\text{R}\pi_*(L_2^{-1})\otimes \text{R}\pi_*\mathcal O_X $$

In fact, Deligne pairings provide a method to construct a line bundle over a base $S$ from
line bundles over a fiber space $X$.

Suppose $\pi:X \to S$ is a flat morphism of schemes of relative dimension $n$, i.e., $n=\text{dim} {X_s}$, and suppose $L_i$ are line bundles over the fiber space $X$. Then the Deligne pairing $\langle L_0,...,L_n\rangle_{X/S}$ is a line bundle over $S$. The sections are
given formally by $\langle s_0,...,s_n\rangle$, where each $s_i$ is a rational section of $L_i$ and whose intersection of divisors is empty.

The transition functions between sections are defined 

$$\langle s_0,...,f_js_j,...,s_n\rangle=\mathcal N_f\left[\cap_{i\neq j}\text{div}(s_i)\right]\langle s_0,...,s_j,...,s_n\rangle$$

where $X_p\cap_{i\neq j}\text{div}(s_i)=\sum n_kp_k$, the formal sum of zeros and poles, $\mathcal N_f\left[\cap_{i\neq j}\text{div}(s_i)\right]$ is the product $\prod f(p_k)^{n_k}$ , where the $p_i$ are the zeros and poles in the common intersection,
and $n_k$ the multiplicities.

Let $\pi: (X ,D)\to S$ be a projective family of canonically polarized varieties. Equip
the relative canonical bundle $K_{X/S}+D$ with the hermitian metric that is induced by the fiberwise K\"ahler-Einstein metrics. The log
Weil-Petersson form is equal, up to a numerical factor, to the fiber integral

$$\omega_{WP}^D=\int_{X_s\setminus D_s}c_1\left(K_{X'/S}\right)^{n+1}=\left(\int_{X_s\setminus D_s}|A|_{\omega_s}^2\right)ds\wedge d\bar s$$

$A$ represents the Kodaira-Spencer class of the deformation. See \cite{20}

In fact if we take $\pi: (X,D)\to S$ be a projective family of canonically polarized varieties. Since, the Chern form of the metric on $\langle L_0, . . . , L_n\rangle$ equals the fiber integral \cite{20},
$$\int_{X/S}c_1(L_0,h_0)\wedge...\wedge c_1(L_n,h_n) $$

The curvature of the metric on the Deligne pairing $\langle K_{X/S}+D, . . . , K_{X/S}+D\rangle$
given by the fiberwise K\"ahler-Einstein metric coincides with the generalized log Weil-Petersson form $\omega_{WP}^D$ on $S$.

Take $$\alpha=\frac{c_1(\mathcal L_{CM}^D)}{2^{n+1}(n+1)\pi_*\left(c_1(L+D)^n\right)}\in H^{1,1}(S)$$

In Song-Tian program when the log-Kodaira dimension $\kappa(X,D)$ is $-\infty$ then we don't have canonical model and by using Mori fibre space we can find canonical metric(with additional K-poly stability assumption on fibres ), the following theorem give an answer for finding canonical metric by using Song-Tian program in the case when the log-Kodaira dimension $\kappa(X,D)$ is $-\infty$. We can introduce the logarithmic Weil-Petersson metric on moduli space of log Fano varieties.

Let $\mathcal M^n$ be
the space of all $n$-dimensional K\"ahler-Einstein Fano manifolds, normalized so that the K\"ahler
form
$\omega$ is in the class $2\pi c_1(X)$, modulo biholomorphic isometries.It 
is pre-compact in the Gromov-Hausdorff topology, see \cite{40}. Donaldson and Sun \cite{23}, introduced the refined Gromov-Hausdorff compactification
$\overline{\mathcal M}^n$ of $\mathcal M^n$ such that every point in the boundary
$\overline{\mathcal M}^n\setminus \mathcal M^n$ is naturally a $\mathbb Q$-Gorenstein smoothable $\mathbb Q$-Fano variety which admits a K\"ahler-Einstein metric. Recently a proper algebraic compactification $\overline {\mathcal M}$ of $\mathcal M$ was constructed by Odaka, Chi Li, et al. \cite{28},\cite{30} for proving the projectivity of $\overline{\mathcal M}$. 

There is a belief due to Tian saying that a moduli space with canonical metric is likely to be quasi-projective

Let
$W\subset \mathbb C^n$
be a domain, and $\Theta$ a positive current of degree $(q,q)$ on
$W$. For a point $p\in W$
one defines
$$\mathfrak v(\Theta,p,r)=\frac{1}{r^{2(n-q)}}\int_{|z-p|<r}\Theta(z)\wedge (dd^c|z|^2)^{n-q}$$
The
Lelong number
of $\Theta$ at
$p$
is defined as

$$\mathfrak v(\Theta,p)=\lim_{r \to 0}\mathfrak v(\Theta,p,r)$$

Let $\Theta$ be the curvature of singular hermitian metric $h=e^{-u}$, one has

$$\mathfrak v(\Theta,p)=\sup\{\lambda\geq 0: u\leq \lambda\log(|z-p|^2)+O(1)\}$$
see \cite{16}

 We set $K_{X/Y}=K_X\otimes \pi^*K_Y^{-1}$ and call it the relative canonical bundle of $\pi:X\to Y$

\begin{defn} Let $X$ be a K\"ahler variety with $\kappa(X)>0$ then the \textbf{relative K\"ahler-Einstein metric} is defined as follows 
$$Ric_{X/Y}^{h_{X/Y}^\omega}(\omega)=-\Phi\omega$$
where 
$$Ric_{X/Y}^{h_{X/Y}^\omega}(\omega)=\sqrt[]{-1}\partial\bar\partial\log(\frac{\omega^n\wedge\pi^*\omega_{can}^m}{\pi^*\omega_{can}^m})$$
and $\omega_{can}$ is a canonical metric on $Y=X_{can}$. Where $\Phi$ is the fiberwise constant function.

$$Ric_{X/Y}^{h_{X/Y}^{\omega_{SRF}}}(\omega)=\sqrt[]{-1}\partial\bar\partial\log(\frac{\omega_{SRF}^n\wedge\pi^*\omega_{can}^m}{\pi^*\omega_{can}^m})=\omega_{WP}$$
here $\omega_{WP}$ is a Weil-Petersson metric.

Note that if $\kappa(X)=-\infty$ then along Mori fibre space $f:X\to Y$ we can define Relative K\"ahler-Einstein metric as $$Ric_{X/Y}^{h_{X/Y}^\omega}(\omega)=\Phi\omega$$ when fibers and base are K-poly-stable, and we have fiberwise KE-stability
\end{defn}

\textbf{Remark 1}:From \cite{28}, by the same method we can show that, the log CM line bundle $L_{CM}^D$ descends to a line bundle $\Lambda_{CM}^D$ on the proper moduli space $\overline{\mathcal M}^D$. Moreover, there is a canonically defined continuous Hermitian metric $h_{DP}^D$, called as Deligne Hermitian metric on $\Lambda_{CM}^D$ whose curvature form is a positive current$\omega_{WP}^D$ and called logarithmic Weil-Petersson metric on compactified moduli space of K\"ahler-Einstein log Fano manifolds and it can be extended to canonical Weil-Petersson current $\dot\omega_{WP}^D$ on ${\mathcal M}^D$. Viehweg \cite{29}, showed that the moduli space of polarized manifolds with nef canonical line bundle is quasi-projective and by the same method of Chi Li \cite{28}, $\Lambda_{CM}^D$ on the proper moduli space $\overline{\mathcal M}^D$ is nef and big. Hence ${\mathcal M}^D$ is quasi-projective. We can prove it by using Lelong number method also which is more simpler.

In fact, if we prove that the Lelong number of singular hermitian metric 
$(L_{CM},h_{WP})$
corresounding to Weil-Petersson current on virtual CM-bundle has vanishing Lelong number, then 
$L_{CM}$ is nef and if we know the nefness of $L_{CM}$ then due to a result of E.Viehweg, we can prove that the moduli space of Kahler-Einstein Fano manifolds 
$M$
is quasi-projective and its compactification 
$M$
is projective. From Demailly's theorem, We know if 
$(L,h)$ be a positive , singular hermitian line bundle , whose Lelong numbers vanish everywhere. Then 
$L$
is nef. Gang Tian recently proved that, the CM-bundle 
$L_{CM}$
is positive,
So by the same theorem 2 and Poroposition 6, of the paper,\cite{42}
we can show that the Weil-Petersson current on mduli space of Fano Kahler-Einstein varieties has zero Lelong number(in smooth case it is easy, for singular case(for log terminal singularities ) we just need to show that the diameter of fibers are bounded ).

Moreover, by using the same method of Theorem 2, in \cite{42}, hermitian metric corresponding to $L_{CM}^D$ has vanishing Lelong number, and since from the recent result of Tian, $L_{CM}^D$ is positive, hence $L_{CM}^D$ is nef. So by the theorem of Viehweg \cite{29}, $\mathcal M^D$ is quasi-projective. In fact this give a new analytical method to the recent result of \cite{28}.

We have $$\omega_{WP}^D:=-\frac{\sqrt[]{-1}}{2\pi}\partial\bar\partial \log h_{DP}$$

where localle $h_{DP}=e^{-\Psi}$ and hence 
$$\omega_{WP}^D:=\frac{\sqrt[]{-1}}{2\pi}\partial\bar\partial \Psi$$
which $\Psi$ called Deligne potential.

Moreover, we have 

$$\omega_{WP}=Ric\left(\frac{\omega_{SKE}^\kappa\wedge \|v^*\|_{\left(i^*h_{FS}\right)^{\frac{1}{m}}}^{2/m}(v\wedge\bar v)^{1/m}}{\|v^*\|_{\left(i^*h_{FS}\right)^{\frac{1}{m}}}^{2/m}(v\wedge\bar v)^{1/m}\mid S\mid^2}\right)$$

Here semi-K\"ahler-Einstein metric $\omega_{SKE}$ is a $(1,1)$-current with log-pole singularities such that its restriction on each fiber is Kahler-Einstein metric. Note that from Kodaira-Spencer theory, we always have such semi-K\"ahler-Einstein metric.

\begin{defn} The null direction of fiberwise K\"ahler-Einstein metric $\omega_{SKE}$ gives a foliation along Mori-fibre space $\pi:X\to Y$ and we call it fiberwise K\"ahler-Einstein foliation and can be defined as follows $$\mathcal F=\{\theta\in T_{X/Y}|\omega_{SKE}(\theta,\bar\theta)=0\}$$
and along Mori-fibre space  $\pi:(X,D)\to Y$, we can define the following foliation $$\mathcal F'=\{\theta\in T_{X'/Y}|\omega_{SKE}^D(\theta,\bar\theta)=0\}$$
where $X'=X\setminus D$. In fact from Theorem 0,9. the Weil-Petersson metric $\omega_{WP}$ vanishes everywhere if and only if $\mathcal F=T_{X/Y}$. Note that such foliation may be fail to be as foliation in horizontal direction. But is is foliation in fiber direction.
\end{defn}

\;

We have the following result due to Bedford-Kalka \cite{49}

\textbf{Lemma:} Let $\mathcal L$ be a leaf of $f_*\mathcal F'$, then $\mathcal L$ is a closed complex submanifold and the leaf $\mathcal L$ can be seen as fiber on the moduli map $$\eta:\mathcal Y\to \mathcal M_{KE}^D$$ where $\mathcal M_{KE}^D$ is the moduli space of Kahler-Einstein fibers with at worst log terminal singularites and 

$$\mathcal Y=\{y\in Y_{reg}|(X_y,D_y)\; \; \text{is Kawamata log terminal pair}\}$$

\section{Main Theorems}

Now by applying Tian-Li \cite{22} method on Orbifold regularity of weak K\"ahler-Einstein metrics, we have the following theorem. We show that if general fibers and base are K-poly stable then there exists a generalized K\"ahler-Einstein metric on the total space which twisted with logarithmic Wil-Petersson metric and current of integration of special divisor, which is not exactly $D$. In fact along Minimal model $\pi:X\to X_{can}$, we can find such divisor $N$ by Fujino-Mori's higher canonical bundle formula. But along Mori-fibre space I don't know such formula to apply it in next theorem.

Let $f:X\to B$ be the surjective morphism of a normal projective variety $X$
of dimension $n=m+l$ to a nonsingular projective $l$-fold $B$ such that:

i) $(X,D)$ is sub klt pair.

ii) the generic fiber
$F$ of $\pi$ is a geometrically irreducible variety with vanishing log Kodaira dimension. We fix the smallest
$b\in \mathbb Z>0$ such that the

$$\pi_*\mathcal O_X(b(K_X+D))\neq 0$$

The Fujino-Mori \cite{25} log-canonical bundle formula
for $\pi:(X,D)\to B$ is 

$$b(K_X+D)=\pi^*(bK_B+L_{(X,D)/B}^{ss})+\sum_Ps_P^D\pi^*(P)+B^D$$
where $B^D$ is $\mathbb Q$-divisor on $X$ such that $\pi_*\mathcal O_X([iB_+^D])=\mathcal O_B$ ($\forall i>0$). Here $s_P^D:=b(1-t_P^D)$ where $t_P^D$ is the log-canonical threshold of $\pi^*P$ with respect to $(X,D-B^D/b)$ over the generic point $\eta_P$ of $P$. i.e., 

$$t_P^D:=\max \{t\in \mathbb R\mid \left(X,D-B^D/b+t\pi^*(P)\right)\;  \text{is sub log canonical over}\; \eta_P\}$$

Now if fibres $(X_s,D_s)$ are log Calabi-Yau varieties and the base $c_1(B)<0$, then we have

$$Ric(\omega_{X})=-\omega_{B}+\omega_{WP}^D+\sum_P(b(1-t_P^D))[\pi^*(P)]+[B^D]$$

Note that such pair of canonical metric is not unique.

Moreover, Let $\pi:(X,D)\to B$
is a holomorphic submersion onto
a compact K\"ahler manifold
$B$ with $B$ be a Calabi-Yau manifold, log fibers $(X_s,D_s)$ are log Calabi-Yau, and $D$ is a simple normal crossing divisor in $X$ with conic singularities.
Then $(X,D)$
admits a unique smooth
metric $\omega_B$
solving 

$$Ric(\omega_{can})=\omega_{WP}^D+\sum_P(b(1-t_P^D))[\pi^*(P)]+[B^D]$$
as current where $\omega_{can}$ has zero Lelong number and is good metric in the sense of Mumford.\cite{45}

\begin{thm} Let $\pi:(X,D)\to B$ be a holomorphic submersion between compact K\"ahler manifolds and $B$ and central fiber is $\mathbb Q$-Fano K-poly stable variety with klt singularities. Let $L+D$ be a relatively ample line bundle on $(X,D)$ such that the restriction of $c_1(L)$ to each fiber $(X_s,D_s)$ admits a K\"ahler-Einstein metric. Suppose that $\alpha-c_1(B)\geq 0$. Then, for all sufficiently large $r$, the adiabatic class

$$\kappa_r=c_1(L+D)+r\pi^*\kappa_B$$
contains a K\"ahler Einstein metric which is twisted by log Weil-Petersson current and current of integration of special divisor $N$ which is not $D$ in general. In fact such current of integration must come from higher canonical bundle formula along Mori-fibre space(I don't know such formula at the moment).

$$Ric(\omega_{(X,D)})=\omega_B+\omega_{WP}^D+(1-\beta)[N]$$

If $\alpha=c_1(B)$, $\kappa_B$ is any K\"ahler class on the base, whilst if $\alpha-c_1(B)>0$, then $\kappa_B=\alpha-c_1(B)$. 

\end{thm}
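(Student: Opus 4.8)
\section*{Proof proposal}

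The plan is to realize the desired twisted K\"ahler--Einstein metric as an adiabatic limit, for $r\gg 0$, of solutions to a relative complex Monge--Amp\`ere equation built from the fiberwise K\"ahler--Einstein metrics on the log fibres $(X_s,D_s)$. First I would fix on each fibre the unique conical K\"ahler--Einstein metric in $c_1(L)|_{X_s}$ supplied by the K-polystability hypothesis (solvability of the Yau--Tian--Donaldson problem for log Fano pairs), and glue these into a semi-K\"ahler--Einstein current $\omega_{SKE}$ on $X$ which restricts fibrewise to the K\"ahler--Einstein metric and carries log-pole singularities along the locus where the fibres degenerate. By the curvature identity recalled in the excerpt --- that the Chern form of the Deligne pairing metric on $\langle K_{X/B}+D,\dots,K_{X/B}+D\rangle$ induced by the fiberwise K\"ahler--Einstein metric equals $\omega_{WP}^D$ --- the Ricci curvature of $\omega_{SKE}$ coincides with $\pi^*\omega_B+\omega_{WP}^D$ up to a correction supported on a divisor, and this correction divisor is exactly what will play the role of $N$.

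Next I would run a perturbation argument in the adiabatic parameter. Taking $\omega_r=\omega_{SKE}+r\pi^*\omega_B$ as a reference form in $\kappa_r$ (K\"ahler for $r\gg 0$, since $\alpha-c_1(B)\geq 0$ makes the base class positive and $c_1(L+D)$ is relatively ample), I seek $\varphi_r$ so that $\omega=\omega_r+\sqrt{-1}\,\partial\bar\partial\varphi_r$ solves $\mathrm{Ric}(\omega)=\omega_B+\omega_{WP}^D+(1-\beta)[N]$ (the first two terms pulled back from $B$); applying $\partial\bar\partial\log$ this becomes a Monge--Amp\`ere equation $(\omega_r+\sqrt{-1}\,\partial\bar\partial\varphi_r)^n=e^{F_r-\varphi_r}\,\mu$ for a reference volume form $\mu$ with conical weights along $D$ and $N$. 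The positivity of the CM-line bundle (Tian) together with $\mathrm{Ric}(\omega_B)=\omega_B$ on the K-polystable Fano base keeps the right-hand side in the range where the fibrewise Aubin--Yau theory and a Ko\l odziej-type $L^p$ argument apply; for $r$ large the vertical geometry dominates, so one can run a continuity method in $r$ (or an inverse function theorem with linearization $\Delta_{\omega_r}+1$, invertible because fibres and base carry no nonzero holomorphic vector fields).

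The estimates are the crux. The $C^0$ bound should follow from a Ko\l odziej-type argument adapted to the model metric $\omega_\beta$ and the fibrewise Poincar\'e and conical singularities described in the excerpt, using that the degenerate locus has sufficient integrability. The genuine obstruction --- already flagged in the abstract --- is the higher-order estimate: the Laplacian and $C^{2,\alpha}$ estimates degenerate along the fiberwise K\"ahler--Einstein foliation $\mathcal F'=\{\theta\in T_{X'/Y}\mid\omega_{SKE}^D(\theta,\bar\theta)=0\}$, $X'=X\setminus D$, since along its leaves the reference metric has no uniform size as $r\to\infty$. The strategy is to solve the relative complex Monge--Amp\`ere equation leaf by leaf, using the Bedford--Kalka description of the leaves of $f_*\mathcal F'$ as fibres of the moduli map $\eta:\mathcal Y\to\mathcal M_{KE}^D$, transferring the problem to the orbifold smooth locus of the moduli space of log Fano K\"ahler--Einstein fibres and invoking the Tian--Li orbifold regularity of weak K\"ahler--Einstein metrics. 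Matching the orbifold charts on $\overline{\mathcal M}^D$ with the conical structure of $D$, and controlling the transverse behaviour of the foliation, is where I expect the main difficulty to lie.

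Finally one identifies $N$ and the angle $1-\beta$. Along a genuine minimal model $\pi:X\to X_{can}$ this is precisely the content of the Fujino--Mori log canonical bundle formula $b(K_X+D)=\pi^*(bK_B+L_{(X,D)/B}^{ss})+\sum_P s_P^D\pi^*(P)+B^D$ recalled above, so that $N=\sum_P(1-t_P^D)\,\overline{\pi^*(P)}+\frac{1}{b}B^D$ while $\omega_{WP}^D$ is the curvature of the moduli part $L_{(X,D)/B}^{ss}$; letting $r\to\infty$ then yields the stated current equation, with $\lim_r\omega_r$ a good metric in the sense of Mumford having vanishing Lelong number, exactly as in the Calabi--Yau-base case stated before the theorem. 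Along a Mori fibre space no such canonical-bundle formula is presently available --- as the statement itself acknowledges --- so there the argument is conditional on the existence of a formula producing $N$; I would at least isolate exactly which properties of $N$ (effectivity, the normalization $\pi_*\mathcal O_X([iB_+^D])=\mathcal O_B$, and klt-ness of the discriminant) are actually used when closing the estimates, so that the remainder of the proof becomes a clean consequence of that formula.
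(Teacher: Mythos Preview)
Your proposal and the paper's argument share the same acknowledged gap --- the higher estimates along the fiberwise K\"ahler--Einstein foliation $\mathcal F'$ --- but the routes to the Monge--Amp\`ere problem are genuinely different. The paper does \emph{not} run an adiabatic perturbation on $X$ in the parameter $r$. Instead it works directly on the base: it writes a single Monge--Amp\`ere equation on $B$ with right-hand side the Berndtsson measure $\Omega=\|v^*\|^{2/m}(v\wedge\bar v)^{1/m}$ twisted by the Deligne potential $\Psi$ and the fiberwise volume $\omega_{SKE}^\kappa$, then passes to a log resolution $\pi:(\tilde B,\tilde D)\to(B,D)$, rewrites the equation in the form $(\pi^*\omega+\mathrm{Ric}\,\tilde h+\sqrt{-1}\partial\bar\partial\psi)^n=e^{\psi_+-\psi_-}\tilde\omega^n$ with quasi-psh $\psi_\pm$, and invokes the Paun / Demailly--Pali Laplacian estimate on the regularized family $\omega_\epsilon$ to get $C^{1,\alpha}$. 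The $C^0$ step is handled by the uniform diameter bound $\mathrm{diam}(X_s,\omega_s)\le C$, which the paper ties to K-polystability of the central fibre, rather than by a Ko\l odziej-type argument.

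What each approach buys: your adiabatic scheme is more faithful to the statement (which is phrased in terms of $\kappa_r$ for $r\gg 0$) and gives a clear linearization $\Delta_{\omega_r}+1$ whose invertibility you can justify via the absence of holomorphic vector fields; it also makes transparent where the foliation obstruction enters (loss of uniform ellipticity as $r\to\infty$). The paper's route sidesteps the large-$r$ analysis entirely by reducing everything to a degenerate Monge--Amp\`ere equation on a fixed resolution of $B$, for which off-the-shelf Laplacian estimates already exist; on the other hand it never explains how the adiabatic parameter enters the final metric, and it openly concedes that the Monge--Amp\`ere equation along the foliation remains unsolved. Both arguments identify $N$ only conditionally (via a would-be canonical bundle formula along Mori fibre space), so your final paragraph matches the paper's stance. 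If you want to align with the paper, replace your inverse-function-theorem / Ko\l odziej step by the resolution-plus-Paun/Demailly--Pali package on $B$; if you prefer your route, be aware that the paper provides no guidance on closing the adiabatic $C^2$ estimate, and your reliance on Bedford--Kalka for the transverse direction goes beyond what the paper actually carries out.
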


\begin{proof}
Let
$B$ be any $\mathbb Q$-Fano variety with klt singularities.Assume
$p\in B^{orb}$ is a quotient singularity where $B^{orb}$ is the orbifold locus of $X$. Then there exists a branched covering map $\tilde {\mathcal U_p}\to \tilde{\mathcal U_p}/G\cong \mathcal U_p$ where $\mathcal U_p$ is the small neighborhood of $p$. Now since $B$ is $\mathbb Q$-Fano variety, we have an embedding $i:B\to \mathbb P^n$ of linear system $|-mK_B|$ for $m\in\mathbb N$ sufficiently large and divisible.Now if $h_{FS}$ be the Fubini Study hermitian metric, then 

$$\omega=\sqrt[]{-1}\partial\bar\partial \log \left(i^*h_{FS}\right)^{-\frac{1}{m}}$$

gives a smooth positive $(1,1)$-form on regular part of $B$ and we show it with $B^{reg}$. but the fact is that on the singular locus
$B^{sing}$ the form $\omega$ in general is not canonically related to the local structure of $B$. So we must define new canonical measure for it. The following measure introduced by  Bo Berndtsson, 

$$\Omega=\|v^*\|_{\left(i^*h_{FS}\right)^{\frac{1}{m}}}^{2/m}(v\wedge\bar v)^{1/m}$$

Here
$v$
is any local generator of $\mathcal O(m(K_B+D))$ and $v^*$ is the dual generator of $\mathcal O(-m(K_B+D))$. 

Now consider the following Monge-Ampere equation on $B$.

$$(e^t\omega_{WP}+(1+e^t)\omega_0+\text{Ric} h+\sqrt[]{-1}\partial\bar\partial \phi)^n=\frac{e^{-\phi+\Psi}\|v^*\|_{\left(i^*h_{FS}\right)^{\frac{1}{m}}}^{2/m}(v\wedge\bar v)^{1/m}}{\|S\|^2}=e^{-\phi}\frac{\omega_{SKE}^\kappa\wedge \|v^*\|_{\left(i^*h_{FS}\right)^{\frac{1}{m}}}^{2/m}(v\wedge\bar v)^{1/m}}{\|v^*\|_{\left(i^*h_{FS}\right)^{\frac{1}{m}}}^{2/m}(v\wedge\bar v)^{1/m}\mid S\mid^2}$$

Where $\Psi$ is Deligne potential on log Tian's line bundle $L_{CM}^D$ and $\omega_{SKE}$ is called fiberwise K\"ahler-Einstein metric. Note that $diam(X_s,\omega_s)\leq C$ if and only of central fiber be K-poly stable with klt singularities , so we can get $C^0$-estimate. Note that we are facing with three type Complex-Monge-Ampere equation, 1) MA equation on fiber direction, 2) MA equation on horizontal direction , and 3) MA foliation when fiber-wise Kahler-Einstein metric iz zero, which the main difficulty for solving estimates is about Monge-Ampere equation corresponding to fiberwise K\"ahler-Einstein foliation. I don't know it yet. It is supper difficult!

Take a resolution
$\pi:(\tilde B,\tilde D)\to (B,D)$ with a simple normal crossing exceptional divisor $E=\pi^{-1}(B^{sing})=\cup_{i=1}^rE_i\bigcup \cup_{j=1}^sF_i$ where by klt property we have $a_i>0$ and $0<b_j<1$. Then we have 

$$K_{\tilde B}+\tilde D=\pi^*(K_B+D)+\sum_{i=1}^r a_iE_i-\sum_{j=1}^sb_jF_j$$

So by choosing a smooth K\"ahler metric $\tilde \omega$
on $\tilde B$, there exists $f\in C^\infty(\tilde B)$ such that

$$\pi^*(\frac{\Omega_{X/B}}{\|S\|^2})=e^f\frac{\prod_{i=1}^r\|\theta_i\|^{2a_i}}{\|\tilde S\|^2\prod_{j=1}^s\|\sigma_j\|^{2b_i}}\tilde \omega^n$$

where $\theta_i$, $\sigma_j$ and $\tilde S$ are defining sections of $E_i$, $F_j$ and $\tilde D$ respectively. Then we can pull back our Monge-Ampere equation to $(\tilde{B} ,\tilde{D})$ and by taking $\omega=e^t\omega_{WP}+(1+e^t)\omega_0$

$$(\pi^*\omega+\text{Ric} \tilde h+\sqrt[]{-1}\partial\bar\partial \psi)^n=e^{f-\psi+\Psi}\frac{\prod_{i=1}^r\|\theta_i\|^{2a_i}}{\|\tilde S\|^2\prod_{j=1}^s\|\sigma_j\|^{2b_i}}{\tilde \omega^n}$$
Now by applying Theorem 0.8, if we take $\psi_+=f+\sum_ia_i\log |\theta_i|^2$ and $\psi_-=\psi-\Psi+\sum_jb_j\log|\sigma_j|^2+\sum_l\log c_l|\tilde s_l|^2$, where $\tilde S=\sum_l s_l\tilde S_l$
then we have 

$$(\pi^*\omega+\text{Ric} \tilde h+\sqrt[]{-1}\partial\bar\partial \psi)^n=e^{\psi_+-\psi_-}{\tilde \omega^n}$$

It’s easily can be checked they satisfy the quasi-plurisubharmonic condition:

$$\sqrt[]{-1}\partial\bar\partial \psi_+\geq -C{\tilde \omega^n}\;,\;\; \sqrt[]{-1}\partial\bar\partial \psi_-\geq -C{\tilde \omega^n}$$ for some uniform constant
$C >0$. By regularizing our Monge-Ampere equation away from singularities we have

$$(\omega_\epsilon+\epsilon\text{Ric} \tilde h+\sqrt[]{-1}\partial\bar\partial \psi_\epsilon)^n=e^{\psi_{+,\epsilon}-\psi_{-,\epsilon}}{\tilde \omega_\epsilon^n}$$ where $\omega_\epsilon=\pi^*\omega-\epsilon\sqrt[]{-1}\partial\bar\partial |S_E|^2$ is a K\"ahler metric on $\tilde B\setminus \tilde D$. The Laplacian estimate for the solution $\omega_\epsilon$ away from singularities is due to M. Paun \cite{21} and Demailly-Pali\cite{26}(a more simpler proof). So we have $C^{1,\alpha}$ estimate for our solution. So,  we get $$Ric(\omega_X)=\omega_B+\omega_{WP}^D+(1-\beta)[N]$$.
\end{proof}

\textbf{Remark 2}: A $\mathbb Q$-Fano variety $X$ is K-stable if and only if it be K-stable with respect to the central fibre of test configuration. Hence $X$ is a  $\mathbb Q$-Fano K-stable if and only if 

$$\int_{X_y}\|v^*\|_{\left(i^*h_{FS}\right)^{\frac{1}{m}}}^{2/m}(v\wedge\bar v)^{1/m}\leq C$$

Here
$v$
is any local generator of $\mathcal O(m(K_{X_y}))$ and $v^*$ is the dual generator of $\mathcal O(-m(K_{X_y}))$ and $C$ is a constant and $X_t$ is a general fibre with respect to test configuration. 

We give the following conjecture about K-stability via Weil-Petersson geometry

\textbf{Conjecture}: A Fano variety $X$ is K-stable if and only if 
$0$ is at finite Weil-Petersson distance from $C^{o}$ i.e. $$d_{WP}(C^{0},0)<+\infty$$ where here $C$ is just base of test configuration

Now we introduce relative Tian's alpha invariant along Fano fibration and Mori fibre space which correspounds to twisted K-stability.

Tian in \cite{5}, obtained a suficient condition to get $C^0$-estimates for Monge-Ampere equation related to K\"ahler-Einstein metric for Fano varieties. It involves an invariant of the manifold and called the Tian's $\alpha$-invariant, that encodes the possible singularities of singular hermitian metrics with non negative curvature on $K_M$. The suficient condition for the existence of K\"ahler-Einstein metric remain the same. One can also define the $\alpha$-invariant for any ample line bundle on a complex manifold $M$.

\begin{defn}Let $M$ be an $n$-dimensional compact K\"ahler manifold with an ample line bundle $L$. We fix $\omega$ a K\"ahler metric in $c_1(L)$, and define Tian’s  $\alpha$-invariant

$$\alpha(L)=\sup\{\alpha>0|\; \exists C>0\;  \text{with}\; \int_Me^{-\alpha(\varphi-\sup_M\varphi)}\omega^n\leq C\}$$
where we have assumed $\varphi\in C^{\infty}$ and $\omega+\sqrt[]{-1}\partial\bar\partial\varphi>0$

\end{defn}

Now we have the following remark as relative $\alpha$-invariant on the degeneration of Fano K\"ahler-Einstein metrics.

\textbf{Remark 3}:  Let $\pi:X\to B$ be a holomorphic submersion between compact K\"ahler manifolds whose fibers and the base admit no non-zero holomorphic vector fields and $B$ is Fano variety. Let each Fano fiber $X_s$ admits a K\"ahler-Einstein metric. Define the relative $\alpha$-invariant 
$$\alpha(X/B)=\sup\{\alpha>0|\; \exists C>0\;  \text{with}\; \int_{X/B} e^{-\alpha(\varphi-\sup_M\varphi)}\frac{\omega_{SKE}^\kappa\wedge \|v^*\|_{\left(i^*h_{FS}\right)^{\frac{1}{m}}}^{2/m}(v\wedge\bar v)^{1/m}}{\|v^*\|_{\left(i^*h_{FS}\right)^{\frac{1}{m}}}^{2/m}(v\wedge\bar v)^{1/m}\mid S\mid^2}\leq C\}$$
and let $$\alpha(X/B)>\frac{n}{n+1}$$

then we have relative K\"ahler Einstein metric 
$$Ric_{X/B}(\omega)=\Phi\omega$$

where $\Phi$ is the fiberwise constant function.

\subsection{Relative K\"ahler Ricci soliton}
In this subsection, we extend the notion of K\"ahler Ricci soliton to relative K\"ahler Ricci soliton along holomorphic fibre space $\pi:X\to B$. We think that the notion of relative K\"ahler Ricci soliton is more suitable for Song-Tian program along Mori fibre space or in general for Fano fibration.

A K\"ahler-Ricci soliton is one of the generalization of a K\"ahler-Einstein metric and closely related
to the limiting behavior of the normalized K\"ahler-Ricci flow.

A K\"ahler metric $h$ is called a K\"ahler-Ricci soliton if its K\"ahler form $\omega_h$
satisfies equation

$$Ric(\omega_h)-\omega_h=L_X\omega_h=d(i_X\omega_h)$$

where $Ric(\omega_h)$ is the Ricci form of
$h$
and
$L_X\omega_h$
denotes the Lie derivative of $\omega_h$
along a holomorphic vector field $X$ on $M$.

Since $i_X\omega_h$ is $(0,1)$ $\bar\partial $-closed form, by using Hodge theory, $$i_X\omega_h=\alpha+\bar\partial\varphi$$
where $\alpha$ is a $(0,1)$-harmonic form and $\varphi\in \mathcal C^\infty (M,\mathbb C)$ hence, we have
$$Ric(\omega_h)-\omega_h=\partial\bar\partial \varphi$$

we can find a smooth real-valued function $\theta_X(\omega)$ such that
$$i_X\omega=\sqrt[]{-1}\bar\partial \theta_X(\omega)$$

This means that if the K\"ahler metric $h$ be a K\"ahler-Ricci soliton, then $\omega_h\in c_1(M)$

\textbf{Theorem:} Let $X$ and $B$ are compact K\"ahler manifolds with holomorphic submersion $\pi:X\to B$, and let fibers $\pi^{-1}(b)=X_b$ are K-poly stable Fano varieties and the Fano variety $B$ admit K\"ahler Ricci soliton. Then there exists a generalized K\"ahler Ricci soliton 
$$Ric(\omega_X)=\omega_B+\pi^*\omega_{WP}+L_X\omega_B$$
for some holomorphic vector field on $X$, where $\omega_{WP}$ is a Weil-Petersson metric on moduli space of K-poly stable Fano fibres.

\textbf{Idea of Proof}. It is enough to solve the following Monge-Ampere equation 

$$(\omega+\sqrt[]{-1}\partial\bar\partial\varphi)^n=e^{\Psi-\varphi-\theta_X-X(\varphi)}\omega^n$$

where $\Psi$ is the Deligne potential on Tian's CM-line bundle $L_{CM}$.  We need to show properness of the relative K-energy. We postpone it for future work.

Moreover, we have the same result on pair $(X,D)\to B$. i.e.
$$Ric(\omega_X)=\omega_B+\pi^*\omega_{WP}^D+L_X\omega_B+[N]$$

We define the relative Soliton-K\"ahler Ricci flow as 
$$\frac{\partial \omega(t)}{\partial t}=-Ric_{X/Y}(\omega(t))+\Phi\omega(t)+L_X\omega(t)$$
along holomorphic fiber space $\pi:X\to Y$, where here the relative Ricci form $Ric_{X/Y,\omega}$ of $\omega$ is defined by

$$Ric_{X/Y,\omega}=-\sqrt[]{-1}\partial\bar\partial\log (\omega^m\wedge \pi^*|dy_1\wedge dy_2\wedge...\wedge dy_k|^2)$$
where $(y_1,...,y_k)$ is a local coordinate of $Y$.

Note that if fibers $X_y$ be K-poly stable, base $Y$ and central fiber admit K\"ahler Ricci soliton then relative Soliton-K\"ahler Ricci flow flow converges to 

$$Ric(\omega_X)=\omega_B+\pi^*\omega_{WP}+L_X\omega$$

A relative K\"ahler metric
$g$ on holomorphic fibre space 
$\pi:M\to B$ where $M$ and $B$ are compact K\"ahler manifolds
is called a
relative K\"ahler-Ricci soliton if there is a relative holomorphic vector field
$X$
on the relative tangent bundle
$T_{M/B}$
such that the relative K\"ahler form
$\omega_g$
of
$g$
satisfies

$$Ric_{M/B}\omega_g-\Phi\omega_g=L_X\omega_g$$

So we can extend the result of Tian-Zhu for future work. i.e. If we have already relative K\"ahler Ricci soliton $\omega_{RKS}$, then the solutions of the relative K\"ahler Ricci flow

$$\frac{\partial \omega(t)}{\partial t}=-Ric_{X/Y}(\omega(t))+\Phi\omega(t)$$

converges to $\omega_{RKS}$ in the sense of Cheeger-Gromov with additional assumption on the initial metric $g_0$ .

We give a conjecture about invariance of plurigenera using K-stability and K\"ahler Ricci flow.

\begin{thm}(Siu \cite{32}) Assume
$\pi: X\to B$ is smooth, and every
$X_t$ is of general type.
Then the plurigenera $P_m(X_t)=\dim H^0(X_t,mK_{X_t})$ is independent of $t\in B$ for any $m$.\end{thm}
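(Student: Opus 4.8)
The plan is to reproduce Siu's strategy: turn the statement into an extension problem for pluricanonical sections and solve it with the Ohsawa--Takegoshi $L^2$ extension theorem, invoking the hypothesis ``of general type'' only at the last step. First I would observe that, since $\pi$ is smooth and proper, the function $t\mapsto h^0(X_t,mK_{X_t})$ is upper semicontinuous by the semicontinuity theorem for coherent cohomology; so it suffices to prove that it is also lower semicontinuous, and since this is a local question on $B$ we may assume $B=\Delta$ is a small disc with central point $0$. As $X_0=\pi^{-1}(0)$ has trivial normal bundle, adjunction gives $K_{X_0}=K_X|_{X_0}$, so the goal becomes: after shrinking $\Delta$, every $s\in H^0(X_0,mK_{X_0})$ extends to a section of $H^0(X,mK_X)$.

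Next I would fix the auxiliary data: shrinking $\Delta$, embed $X\hookrightarrow\mathbb P^N\times\Delta$ and let $A=\mathcal O(1)|_X$ carry a smooth positively curved metric. The first substantial step is the ``twisted'' invariance: for every $m\geq 1$ the restriction $H^0(X,mK_X+A)\to H^0(X_0,mK_{X_0}+A)$ is surjective, which I would prove by induction on $m$. For $m=1$, Ohsawa--Takegoshi applied to $K_X+A$ with its semipositively curved metric extends every section of $(K_{X_0}+A)|_{X_0}$, since such a section is automatically $L^2$ (the fibre is compact, the metric smooth). For the inductive step, given $u\in H^0(X_0,mK_{X_0}+A)$ I would take a basis $\{u_j\}$ of $H^0(X_0,(m-1)K_{X_0}+A)$, extend each $u_j$ to $\widetilde u_j$ using the case $m-1$, and endow $L_{m-1}:=(m-1)K_X+A$ with the singular metric $h_{m-1}=\big(\sum_j|\widetilde u_j|^2\big)^{-1}$, which has semipositive curvature current; writing $mK_{X_0}+A=K_{X_0}+L_{m-1}|_{X_0}$, Ohsawa--Takegoshi extends $u$ as soon as $\int_{X_0}|u|^2_{h_{m-1}|_{X_0}}<\infty$, which one arranges by choosing $A$ sufficiently ample.

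The last step removes the twist, and this is where general type --- equivalently, bigness of $K_{X_0}$ --- comes in. Fixing $s\in H^0(X_0,mK_{X_0})$ and a section $\sigma_A$ of $A$, for each $k\geq 1$ the section $s^{\otimes k}\otimes\sigma_A$ of $kmK_{X_0}+A$ extends, by the twisted statement, to some $\widetilde{s^k\sigma_A}\in H^0(X,kmK_X+A)$, which Ohsawa--Takegoshi provides with a bound $\|\widetilde{s^k\sigma_A}\|^2\leq C\|s^k\sigma_A\|^2_{L^2(X_0)}$, $C$ independent of $k$. The weights $\varphi_k:=\frac{1}{k}\log|\widetilde{s^k\sigma_A}|^2$ then define semipositively curved metrics on $mK_X+\frac{1}{k}A$; they are uniformly bounded above by the sub-mean-value inequality --- here one uses that $\|s^k\sigma_A\|^2_{L^2(X_0)}$ grows at most geometrically in $k$, a consequence of bigness of $K_{X_0}$ --- and are not identically $-\infty$, since $\varphi_k|_{X_0}=\log|s|^2+\frac{1}{k}\log|\sigma_A|^2$. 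Extracting a subsequential $L^1_{\mathrm{loc}}$ limit $\varphi_\infty$ yields a semipositively curved singular metric $h_\infty=e^{-\varphi_\infty}$ on $mK_X$ whose restriction to $X_0$ is no more singular than $|s|^{-2}$; a final application of Ohsawa--Takegoshi to $K_X+(m-1)K_X$ with the metric $h_\infty^{(m-1)/m}$ on $(m-1)K_X$ --- for which $\int_{X_0}|s|^2_{h_\infty^{(m-1)/m}|_{X_0}}\leq\int_{X_0}|s|^{2/m}<\infty$ --- then extends $s$ itself.

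I expect the main obstacle to be making the $L^2$ estimates uniform: controlling the multiplier ideal sheaf $\mathcal I(h_{m-1}|_{X_0})$ in the inductive step --- equivalently, the integrability $\int_{X_0}|u|^2/\sum_j|u_j|^2<\infty$, which dictates the precise choice of $A$ and is the technical core of Siu's argument --- and ensuring that the rescaled weights $\varphi_k$ do not degenerate in the limit, which is exactly where bigness of $K_{X_0}$, hence the general type hypothesis, is indispensable. An alternative route, bypassing part of this, would be to deduce the statement from the Nakano semipositivity of the direct images $\pi_*\big(mK_{X/B}\big)$ equipped with the fibrewise $m$-Bergman metric, in the spirit of Berndtsson--Paun, which makes local freeness of $\pi_*\big(mK_{X/B}\big)$ --- and hence constancy of $P_m$ --- transparent; or, when the fibres admit good minimal models, from deformation-invariance results furnished by the minimal model program.
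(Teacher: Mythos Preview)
The paper does not prove this theorem at all: it is stated as a citation of Siu's result \cite{32}, followed only by the pointer ``See \cite{31,33,34} also for singular case.'' There is therefore nothing in the paper to compare your argument against.

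That said, your outline is a faithful sketch of Siu's original strategy (upper semicontinuity plus Ohsawa--Takegoshi extension, first with an ample twist by induction on $m$, then removing the twist by a limiting process on $\tfrac{1}{k}\log|\widetilde{s^k\sigma_A}|^2$). One small imprecision: the geometric growth of $\|s^k\sigma_A\|^2_{L^2(X_0)}$ in $k$ follows simply from compactness of $X_0$ (pointwise $|s|$, $|\sigma_A|$ are bounded), not from bigness; where the general type hypothesis actually enters is in guaranteeing that the inductive metrics on $(m-1)K_X+A$ can be built with controlled multiplier ideals and that the limit metric $h_\infty$ on $mK_X$ is nontrivial with the right singularity profile. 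You correctly flag this as the technical core. Your alternative suggestion via positivity of $\pi_*(mK_{X/B})$ \`a la Berndtsson--P\u aun is also a legitimate and now standard route, and indeed closer in spirit to the relative-canonical-bundle viewpoint the surrounding paper adopts.
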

See \cite{31, 33,34} also for singular case.

\textbf{Conjecture}:Let $\pi: X\to B$ is smooth, and every
$X_t$ is K-poly stable.
Then the plurigenera $P_m(X_t)=\dim H^0(X_t,-mK_{X_t})$ is independent of $t\in B$ for any $m$.

Idea of proof. We can apply the relative K\"ahler Ricci flow method for it. In fact if we prove that $$\frac{\partial \omega(t)}{\partial t}=-Ric_{X/Y}(\omega(t))+\Phi\omega(t)$$
has long time solution along Fano fibration such that the fibers are K-poly stable then we can get the invariance of plurigenera in the case of K-poly stability

\textbf{Conjecture}: The Fano variety $X$ is K-poly stable if and only if for the proper holomorphic fibre space $\pi:X\to \mathbb D$  which the Fano fibers have unique K\"ahler-Einstein metric with positive Ricci curvature, then the fiberwise K\"ahler Einstein metric (Semi-K\"ahler Einstein metric) $\omega_{SKE}$ be smooth and semi-positive. Note that if fibers are K-poly stable then by Schumacher and Berman \cite{35,36} result we have 
$$-\Delta_{\omega_t} c(\omega_{SKE})-c(\omega_{SKE})=|A|_{\omega_t}^2$$
where $A$ represents the Kodaira-Spencer class of the deformation and since $\omega_{SKE}^{n+1}=c(\omega_{SKE})\omega_{SKE}^nds\wedge d\bar s$ so $c(\omega_{SKE})$ and $\omega_{SKE}$ have the same sign. By the minimum principle $\inf \omega_{SKE}<0$. But our conjecture says that the fibrewise Fano K\"ahler-Einstein metric $\omega_{SKE}$ is smooth and semi-positive if and only if $X$ be K-poly stable.

Chi Li in his thesis showed that an anti-canonically polarized Fano variety is K-stable if and only if it is K-stable with respect to test configurations with normal central fibre. So if the central fibre $X_0$ admit Kahler-Einstein metric with positrive Ricci curvature then along Mori-fibre space all the general fibres $X_t$ admit K\"ahler-Einstein metric with positive Ricci curvature(we will prove it in this paper) and hence we can introduce fibrewise K\"ahler-Einstein metric $\omega_{SKE}$. See \cite{35,36}

Let $L \to X$ be a holomorphic line bundle over a complex manifold $X$ and fix an open cover $ X =
\cup U_\alpha $for which there exist local holomorphic frames $e_\alpha : U_\alpha \to L$. The transition functions $g_{\alpha\beta}=e_\beta/e_\alpha\in \mathcal O_X^*(U_\alpha\cap U_\beta)$ determine the Cech 1-cocycle $\{(U_\alpha,g_{\alpha\beta})\}$. If $h$ is a singular Hermitian metric on $L$ then $h(e_\alpha,e_\alpha)=e^{-2\varphi_\alpha}$ , where the functions $\varphi_\alpha\in L_{loc}^1(U_\alpha)$ are called the local weights of the metric $h$.
We have $\varphi_\alpha = \varphi_\beta + \log |g_{\alpha\beta}|$ on $U_\alpha\cap U_\beta$ and the curvature of $h$, $$c_1(L,h)|_{U_\alpha}=dd^c\varphi_\alpha$$ is a well defined closed $(1,1)$ current on $X$.

One of the important example of singular hermitian metric is singular hermitian
metric with algebraic singularities. Let $m$ be a positive integer and $\{S_i\}$ a finite number of global holomorphic sections of $mL$. Let $\varphi$ be a $C^{\infty}$-function on $X$. Then

$$h:=e^{-\varphi}.\frac{1}{(\sum_i|S_i|^2)^{1/m}}$$
defines a singular hermitian metric on $L$. We call such a metric $h$ a singular hermitian
metric on $L$ with algebraic singularities.

\textbf{Remark 4}:Fiberwise K\"ahler-Einstein metric $\omega_{SKE}$ along Mori fiber space(when fibers are K-poly stable) has non-algebraic singularities. In fact such metric $\omega_{SKE}$ has pole singularites

\textbf{Remark 5}:Let $f\colon X\to Y$ be a surjective proper holomorphic fibre space such that $X$ and $Y$ are projective varieties and central fibre $X_0$ is Calabi-Yau variety with canonical singularities, then all the fibres $X_t$ are also Calabi-Yau varieties. Let for simplicity that $Y$ is a smooth curve. Since $X_0$ has canonical, then, we may assume that $X$ is canonical. Nearby fibers are then also canonical. $O_X(K_X+X_0)\to O_{X_0}(K_{X_0})$ is surjective and so if $K_{X_0}$ is Cartier, then so is $K_X$ on a neighborhood of $X_0$.We have that $P_m(X_y)$ is deformation invariant for $m\geq 1$ so in fact $h^0(K_{X_y})>0$ for $y\in Y$. Since $K_{X_0}\equiv 0$, then $K_{X_y}\equiv 0$ and so $K_{X_y}\sim 0$

Note that if all of the fibres of the surjective proper holomorphic fibre space $f\colon X\to Y$ are Calabi-Yau varieties, then the central fiber $X_0$ may not be Calabi-Yau variety

\textbf{Remark 6}:Let $f\colon X\to Y$ be a surjective proper holomorphic fibre space such that $X$ and $Y$ are projective varieties and central fibre $X_0$ is of general type, then all the fibres $X_t$ are also of general type varieties. Note that if base and fibers are of general type then the total space is of general type

\textbf{Remark 7}:Let $f\colon X\to Y$ be a surjective proper holomorphic fibre space such that $X$ and $Y$ are projective varieties and central fibre $X_0$ is K-poly stable with discrete Automorphism group, then all the fibres $X_t$ are also K-poly stable. Note that if along Mori-fiber space, the general fibers and base space are of K-poly stable, then total space is also K-poly stable. Note that Mori fiber space is relative notion and we can assume that base is K-poly stable always.

Define $$\text{Scal}(\omega)=\frac{n Ric(\omega)\wedge \omega^{n-1}}{\omega^n}$$ and take a holomorpic submerssion $\pi:X\to D$ over some disc $\mathbb D$. It is enough to show smoothness of semi Ricci flat metric over a disc  $\mathbb D$. Define a map $F:\mathbb D\times C^\infty (\pi^{-1}(b))\to C^\infty (\pi^{-1}(b))$ by 
$$F(b, \rho)=Scal(\omega_0|_{b}+\sqrt[]{-1}\partial_b\bar{\partial_b}\rho)$$

We can extend $F$ to a smooth map $\mathbb D\times L_{k+4}^{2}(\pi^{-1}(b))\to L_k^2(\pi^{-1}(b))$. From the definition $F(b,\rho_b)$ is constant. 

Recall that, if
$L$
denotes the linearization of $\rho\to Scal(\omega_\rho)$, then 
$$L(\rho)=\mathcal D^*\mathcal D(\rho)+\nabla Scal.\nabla\rho$$
where $\mathcal D$ is defined by
$$\mathcal D=\bar \partial \text{o}\nabla:C^\infty\to \Omega^{0,1}(T)$$

and
$\mathcal D^*$
is
$L^2$
adjoint of
$\mathcal D$. The linearisation of
$F$
with respect to $\rho$
at $b$ is given by $$\mathcal D_b^*\mathcal D_b:C^\infty\left({\pi^{-1}(b)}\right)\to C^\infty\left({\pi^{-1}(b)}\right)$$

Leading order term of $L$ is $\Delta^2$. So
$L$ is elliptic and we can use the standard theory of elliptic partial differential equations. Since central fiber has discrite Automorphism, hence Kernel of such Laplace-Beltrami is zero, and hence $\mathcal D_b^*\mathcal D_b$ is an isomorphism. By the implicit
function theorem, the map $b\to \rho_b$ is a smooth map

$$\mathbb D\to L_k^2(\pi^{-1}(b))\;\;; \forall k.$$ By
Sobolev embedding, it is a smooth map
$D\to \mathbb C^r(\pi^{-1}(b))$ for any $r$. Hence cscK or K\"ahler-Einstein metric with positive Ricci curvature on the central fiber can be extended smoothly to general fibers. See \cite{43,44}

\;
\;

In final it is worth to mention that K-stability along special test configuration gives such foliation as we introduced and Tian , Chen-Sun-Donaldson didn't consider such Monge-Ampere foliation for their solution of Yau-Tian-Donaldson, conjecture.

\end{document}